\subjclass[2020]{Primary: 20F67; Secondary: 20F55, 20F65}
\keywords{Gromov boundary, hyperbolic Coxeter groups, Sierpi\'nski curve, Menger curve}
\newcommand{\mylongtitle}[1]{%
	\ifodd\value{page}%
	\protect\parbox{0.9\linewidth}{#1}\hfill%
	\else%
	\hfill\protect\parbox{0.9\linewidth}{#1}%
	\fi%
}
\title[\mylongtitle{Complete characterizations of hyperbolic Coxeter groups with Sierpi\'nski curve boundary
	and with Menger curve boundary}]{%A note on the
	Complete characterizations of hyperbolic Coxeter groups with Sierpi\'nski curve boundary
	and with Menger curve boundary}
\author{Daniel Danielski, Michael Kapovich and Jacek \'Swi\k{a}tkowski }
\date{}%\today}
\newtheorem{thm}{Theorem}[section]    
\newtheorem{thm+}{Theorem}[subsubsection]
\newtheorem{lem}[thm]{Lemma}          
\newtheorem{prop}[thm]{Proposition}
\theoremstyle{definition}
\newtheorem{defn}[thm]{Definition}    % Definition environment with 
\newtheorem{rems}[thm]{Remarks}
\newtheorem*{unrem}{Remark}             % Unnumbered environment for remarks.
\newtheorem*{rem:3.B.13}{Remark/Example/Exercise 3.B.13}
\theoremstyle{plain}
\newtheorem*{namedthm}{\namedthmname}
\newcounter{namedthm}
\theoremstyle{definition}
\newtheorem*{nameddefn}{\nameddefnname}
\newcounter{nameddefn}
\numberwithin{equation}{subsection}
\renewenvironment{proof}{\noindent{\bfseries Proof: }}{\qed}
\newcommand{\double}{\mathbin{\mathpalette\make@circled\star}}
\newcommand{\make@circled}[2]{%
\ooalign{$\m@th#1\smallbigcirc{#1}$\cr\hidewidth$\m@th#1#2$\hidewidth\cr}%
}
\newcommand{\smallbigcirc}[1]{%
\vcenter{\hbox{\scalebox{0.77778}{$\m@th#1\bigcirc$}}}%
}
\newcommand{\Addresses}{{% additional braces for segregating \footnotesize
	\bigskip
	\footnotesize
	\noindent
	D.~Danielski, 
	\par\noindent
	\textsc{Instytut Matematyczny, Uniwersytet Wroc\l awski,
		pl. Grunwaldzki 2, 50-384 Wroc\l aw, Poland}\par\nopagebreak
	\noindent
	\textit{E-mail address}: \texttt{danielski@math.uni.wroc.pl}
	
	\medskip
	\noindent
	M.~Kapovich, 
	\par\noindent
	\textsc{Department of Mathematics, University of California, Davis, One Shields Ave, Davis, CA 95616
	}\par\nopagebreak
	\noindent  
	\textit{E-mail address}: \texttt{kapovich@math.ucdavis.edu}
	
	\medskip
	\noindent
	J.~\'Swi\k{a}tkowski, 
	\par\noindent
	\textsc{Instytut Matematyczny, Uniwersytet Wroc\l awski,
		pl. Grunwaldzki 2, 50-384 Wroc\l aw, Poland}\par\nopagebreak
	\noindent  
	\textit{E-mail address}: \texttt{swiatkow@math.uni.wroc.pl}

}}
\begin{document}
\maketitle

\begin{abstract}
	We give complete characterizations (in terms of nerves) of those 
	word hyperbolic Coxeter groups whose 
	Gromov boundary is homeomorphic to the Sierpi\'nski curve and to the Menger curve,
	respectively.
	The justification is mostly an appropriate combination of various results from the literature.
\end{abstract}

%\tableofcontents

\section{Introduction}\label{intro}

%{\bf 0.1. Overview and context.}
\subsection{Overview and context}
It is a classical and widely open problem to characterize those word hyperbolic groups whose
Gromov boundary is homeomorphic to a given topological space. The complete answers 
(for nonelementary hyperbolic groups) are
known only for the Cantor set (virtually free groups) and for the circle $S^1$
(cocompact Fuchsian groups). For the sphere $S^2$ the expected answer is known as Cannon's Conjecture,
and it remains open. Some partial answers are known in the restricted frameworks. 
For example, Cannon's conjecture is known to be true for Coxeter groups (we discuss 
this issue with more details in Subsection \ref{subsect1_4}). 
In this paper we deal with spaces known as the
Sierpi\'nski curve and the Menger curve, providing complete characterizations of word hyperbolic
Coxeter groups for which these spaces appear as the Gromov boundaries.

Some partial results in this direction have been presented quite recently by several authors.
For example, P.~Dani, M.~Haulmark and G.~Walsh in \cite{dhw} have shown that 
for a word hyperbolic right-angled Coxeter group $W$ whose nerve $L$ is 1-dimensional, 
$\partial W$ is homeomorphic to the Menger curve iff $L$ is {\it unseparable} 
(i.e.~connected, with no separating vertex and no separating pair of nonadjacent vertices)
and non-planar. The third author of the present paper, in \cite{js}, characterized
those word hyperbolic Coxeter groups with  Sierpi\'nski curve boundary whose nerves are
planar complexes. The first author in \cite{dd} provided
a sufficient condition for the nerve of a word hyperbolic right-angled 
Coxeter group $W$, which can be applied to nerves of arbitrary dimension, under which the Gromov boundary $\partial W$ is the Menger curve.

%It was the anonymous reviewer of the above mentioned paper \cite{dd}
%who has drawn our attention to the results of M.~Bourdon and B.~Kleiner in \cite{bk},
%and outlined how they can be applied to obtain the complete characterizations
%as presented in this paper. 
%We view our paper 
%as a note that records this outline and 
%fills it with
%various not quite obvious
%details.
This paper resulted from an observation
(by the second author)
that some results of M.~Bourdon and B.~Kleiner from \cite{bk}
can be applied to obtain the complete characterizations,
as presented below.

%\bigskip\noindent
%{\bf 0.2. Results.}
\subsection{Results}
Before stating our main result we need to recall some terminology and notation appearing in its
statement.
The {\it nerve} of a Coxeter system $(W,S)$ is the simplicial complex $L=L(W,S)$
whose vertex set is identified with $S$ and whose simplices correspond to those subsets
$T\subset S$ for which the special subgroup $W_T$ is finite. The {\it labelled nerve} $L^\bullet$
of $(W,S)$ is the nerve $L$ in which the edges are equipped with labels in such a way that
any edge $[s,t]$ has label equal to the exponent $m_{st}$ from the standard presentation
associated to $(W,S)$ (equivalently, $m_{st}$ is the appropriate entry of the Coxeter matrix
of the system $(W,S)$). 
Obviously, the labelled nerve of a Coxeter system carries the same information as its Coxeter matrix.
Note that the labelled nerve of the direct product of
two Coxeter systems is the simplicial join of the nerves of the two factors, where the labels
at edges of the joined complexes are preserved, and the labels at all ``connecting'' edges (i.e.~edges
having endpoints in both joined complexes) are equal to 2. We call such a labelled nerve
{\it the labelled join} of the labelled nerves of the two factors.
A Coxeter system is called {\it indecomposable} if it cannot be expressed as a direct product of non-trivial Coxeter systems. Observe that a Coxeter system is indecomposable iff its labelled nerve cannot be expressed as a labelled join of two non-trivial labelled complexes.

We use the convention of speaking of topological or simplicial properties
of labelled nerves as of the properties of the corresponding underlying
unlabelled nerves.
The labelled nerve of a Coxeter system is {\it unseparable} if it is connected, has no separating simplex,
no separating pair of nonadjacent vertices,
and no separating {\it labelled suspension} (i.e.~a full subcomplex which is the labelled join of
a simplex and a doubleton). The concept of unseparability 
is useful because of the following characterization
of nonexistence of a splitting along a finite or a 2-ended subgroup in a Coxeter group,
due to Mihalik and Tschantz \cite{mt}:
{\it the group $W$ in a Coxeter system $(W,S)$ has no nontrivial splitting along a finite or
	a 2-ended subgroup iff its labelled nerve is unseparable}
(see Subsection \ref{subsect1_2} for more details).

Given a finite simplicial complex $K$ we define its {\it puncture-respecting cohomological
	dimension}, denoted as $\text{\rm pcd}(K)$, by the formula
$$
\text{\rm pcd}(K):=\max\{ n:\overline H^n(K)\ne0 \text{ \rm or }
\overline H^n(K\setminus\sigma)\ne0
\text{ \rm for some $\sigma\in \mathcal{S}(K)$} \},
$$
where $\mathcal{S}(K)$ is the family of all closed simplices of $K$.
This concept is useful for us due to its role in a formula (by M. Davis) for the virtual cohomological
dimension of a Coxeter group, see Proposition 1.3 below, and its proof.

A {\it 3-cycle} is a triangulation of the circle $S^1$ consisting of precisely 3 edges.

\smallskip
Our main result is the following.

\begin{thm}\label{thm:main}
	Let $(W,S)$ be an indecomposable Coxeter system such that $W$ is infinite word hyperbolic, 
	and let $L^\bullet$ be its labelled nerve. 
	\begin{enumerate}
		\item The Gromov boundary $\partial W$
		is homeomorphic to the Sierpi\'nski curve iff   
		$L^\bullet$ is unseparable, planar (in particular, not a triangulation of $S^2$), and not a 3-cycle. 
		
		\item The Gromov boundary $\partial W$ is homeomorphic to the Menger curve iff  
		$L^\bullet$ is unseparable,  $\text{\rm pcd}(L^\bullet)=1$, and $L^\bullet$ is not planar.
		
	\end{enumerate}

\end{thm}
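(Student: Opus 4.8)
The plan is to reduce both characterizations to known structural results about boundaries of hyperbolic Coxeter groups, combining the Mihalik–Tschantz criterion for unseparability, Davis-type cohomological dimension formulas, and the Bourdon–Kleiner topological recognition of the Sierpi\'nski and Menger curves among boundaries of groups satisfying suitable connectivity and dimension hypotheses.

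\medskip\noindent\textbf{Common framework.}
First I would establish the properties shared by both cases. Since the Sierpi\'nski curve and the Menger curve are both one-dimensional, connected, locally connected, with no local cut points and no global cut points, I would record that $\partial W$ has these properties iff $W$ admits no nontrivial splitting over a finite or $2$-ended subgroup (this rules out cut points and local cut points by Bowditch's JSJ theory) and $\partial W$ is one-dimensional. By the Mihalik–Tschantz characterization quoted in the excerpt, the no-splitting condition is \emph{exactly} unseparability of $L^\bullet$, so in both parts unseparability is forced and I would treat it as a standing hypothesis. For the dimension-one condition I would invoke the Davis/Bestvina–Mess formula relating $\dim\partial W$ to the virtual cohomological dimension, expressed through $\text{\rm pcd}(L^\bullet)$ as in Proposition 1.3: one has $\dim\partial W = \text{\rm vcd}(W)-1$, and $\text{\rm vcd}(W)$ is computed from the reduced cohomology of $L^\bullet$ and of its simplex-complements, i.e.\ $\text{\rm vcd}(W)=\text{\rm pcd}(L^\bullet)+1$. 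Thus $\dim\partial W=1$ iff $\text{\rm pcd}(L^\bullet)=1$, which supplies the cohomological hypothesis in part (2) and, as I explain below, is automatic for planar complexes in part (1).

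\medskip\noindent\textbf{Part (1), Sierpi\'nski curve.}
The strategy is to use Whyburn's topological characterization: a metrizable continuum is the Sierpi\'nski curve iff it is one-dimensional, connected, locally connected, planar, and has no local cut points. For the forward direction, if $\partial W$ is the Sierpi\'nski curve then unseparability follows from the no-cut-point / no-local-cut-point structure via Mihalik–Tschantz, planarity of $\partial W$ forces planarity of $L^\bullet$ (a nonplanar nerve produces a nonplanar, hence non-Sierpi\'nski, piece of the boundary, and a triangulation of $S^2$ would give $\partial W\cong S^2$), and the excluded cases — a $3$-cycle nerve gives $\partial W\cong S^1$, and a sphere triangulation gives $\partial W\cong S^2$ — are exactly the planar unseparable complexes whose boundary is \emph{not} the Sierpi\'nski curve. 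For the converse, assuming $L^\bullet$ unseparable, planar, not $S^2$, and not a $3$-cycle, I would note that planarity forces $\text{\rm pcd}(L^\bullet)=1$ (a planar complex different from a sphere triangulation cannot have nonvanishing top cohomology in the relevant degree) so $\dim\partial W=1$; unseparability gives connectedness, local connectedness, and absence of (local) cut points; and planarity of the nerve yields planarity of $\partial W$. Whyburn's theorem then identifies $\partial W$ as the Sierpi\'nski curve. This is essentially the content of \cite{js} once planarity of the boundary is matched to planarity of the nerve.

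\medskip\noindent\textbf{Part (2), Menger curve, and the main obstacle.}
Here the key input is the topological recognition used by Bourdon–Kleiner: by Anderson's characterization, a compact metrizable space is the Menger curve iff it is one-dimensional, connected, locally connected, has no local cut points, and is \emph{nowhere planar} (i.e.\ no nonempty open subset embeds in the plane). Given unseparability and $\text{\rm pcd}(L^\bullet)=1$, the boundary is already one-dimensional, connected, locally connected, and without (local) cut points, so the whole question reduces to showing that $\partial W$ is nowhere planar \emph{precisely when} $L^\bullet$ is nonplanar. The implication ``$L^\bullet$ planar $\Rightarrow$ $\partial W$ has a planar open piece'' is the easy direction; the hard part is the converse, and this is where the Bourdon–Kleiner machinery is essential. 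One must show that a \emph{single} nonplanar configuration in the nerve propagates, under the self-similar / group-action structure of $\partial W$, to nonplanarity in \emph{every} open subset — this uses the fact that $\partial W$ is a homogeneous-enough boundary on which $W$ acts with dense orbits of the relevant local models, so that a nonplanar local pattern recurs densely. Making this propagation rigorous, and ruling out the degenerate low-complexity cases, is the principal difficulty; I would handle it by combining the local planarity obstruction coming from a nonplanar full subcomplex of $L^\bullet$ (which embeds, via the Davis complex, as a nonplanar piece of $\partial W$) with the minimality/density of the $W$-action to spread non-planarity everywhere, and then appeal directly to Anderson's characterization to conclude $\partial W$ is the Menger curve.
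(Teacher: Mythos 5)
Your common framework and the converse direction of part (1) track the paper closely: unseparability via Mihalik--Tschantz and Bowditch, $\dim\partial W=\text{\rm pcd}(L^\bullet)$ via the Davis and Bestvina--Mess formulas, ``planar nerve $\Rightarrow$ planar boundary,'' and Whyburn's characterization. The genuine gap is the implication ``$\partial W$ is the Sierpi\'nski curve $\Rightarrow$ $L$ is planar,'' which you need in the forward direction of (1) and (in contrapositive) in the backward direction of (2). You justify it by asserting that a nonplanar nerve produces a nonplanar piece of the boundary. That is \emph{not} the contrapositive of the easy lemma (planar nerve $\Rightarrow$ planar boundary); it is its converse, and it is the hardest step of the entire theorem. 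There is no direct way to read a nonplanar subset of the $1$-dimensional space $\partial W$ off a nonplanar subcomplex of $L$ (note that $L$ may be nonplanar for reasons involving its $2$-skeleton while $\partial W$ is a curve, and the paper is not restricted to right-angled groups with graph nerves, where such arguments are sometimes available). The paper proves this implication by a completely different mechanism: assuming $\partial W$ is the Sierpi\'nski curve, it shows that the peripheral subgroups (stabilizers of peripheral circles, in the sense of Kapovich--Kleiner) are conjugates of special subgroups, forms the mirror double $\widetilde W$ of $W$ over representatives $H_1,\dots,H_n$ of their conjugacy classes --- again a Coxeter group, whose nerve is $L$ with a cone attached over each peripheral nerve $L_i$ --- shows via the Kapovich--Kleiner double (an index-$2$ subgroup of $\widetilde W$) that $\partial\widetilde W\cong S^2$, and then invokes the Bourdon--Kleiner resolution of Cannon's conjecture for Coxeter groups to conclude that the nerve of $\widetilde W$ triangulates $S^2$, whence $L$, being a proper subcomplex, is planar. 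None of this appears in your outline, and without it both the forward direction of (1) and the backward direction of (2) are unproven.

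For part (2) you also take a harder route than necessary: you aim to verify Anderson's ``nowhere planar'' condition directly by propagating a nonplanar local model densely through $\partial W$ using the $W$-action. Besides resting on the same unestablished step (producing even one nonplanar piece of $\partial W$ from a nonplanar nerve), this propagation argument is only gestured at. The paper sidesteps it entirely: once $\partial W$ is known to be connected, locally connected, $1$-dimensional and without local cut points, the Kapovich--Kleiner dichotomy says it is homeomorphic either to the Sierpi\'nski curve or to the Menger curve, and the Sierpi\'nski option is excluded because a Sierpi\'nski boundary would force $L$ to be planar by the doubling argument above. I recommend you replace your nowhere-planarity argument with this dichotomy and supply the doubling construction as the proof that a Sierpi\'nski boundary forces a planar nerve.
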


\begin{rems}
	
	\begin{enumerate}
		\item Recall that $W$ is infinite iff its nerve is not a simplex.
		Recall also that word hyperbolicity of $W$ has been characterized by G.~Moussong
		(see \cite{m}, or Theorem 12.6.1 in \cite{da})
		as follows: $W$ is word hyperbolic iff it has no affine special subgroup of rank $\ge3$,
		and no special subgroup which decomposes as the direct product of two infinite
		special subgroups.

		\item
		One of the consequences of the above Moussong's characterization of word hyperbolicity
		is as follows. A word hyperbolic infinite Coxeter group decomposes (uniquely) into the direct
		product of an infinite indecomposable special subgroup
		(which is also word hyperbolic)
		and a finite special subgroup
		(possibly trivial). This allows to extend Theorem 0.1 in the obvious way to Coxeter
		systems $(W,S)$ which are not necessarily indecomposable. Namely, conditions for the nerve
		$L^\bullet$ have to be satisfied up to the labelled join with a simplex.
		
		\item The above two remarks show that Theorem 0.1 actually yields a complete characterization
		(in terms of Coxeter matrices or labelled nerves) 
		of those Coxeter systems $(W,S)$ for which $W$ is word
		hyperbolic and its Gromov boundary $\partial W$ is homeomorphic to the Sierpi\'nski curve 
		or to the Menger curve. 
		We skip the straightforward details of such characterizations.
		
	\end{enumerate}

\end{rems}

{\bf 0.3. Plan of the paper.}
In Section \ref{sect1} we collect various (rather numerous) preparatory results,
and in Section \ref{sect2} we provide the main line of the argument of the proof
of Theorem \ref{thm:main} (which is relatively short).

More precisely, here is the structure of Section \ref{sect1}.
In Subsection \ref{subsect1-1} we recall the famous topological characterizations
of the Sierpi\'nski curve and of the Menger curve, due to Whyburn \cite{wh}
and to Anderson \cite{an}, respectively. In Subsection \ref{subsect1_2} we present
a complete characterization (in terms of labelled nerves) of those
word hyperbolic Coxeter groups whose Gromov boundary is connected
and has no local cut points. As we explain, this characterization is a more
or less direct consequence of the results of Bowditch \cite{b}, Davis \cite{da2,da},
and Mihalik and Tschantz \cite{mt}. 
In Subsection \ref{subsect1_3} we present a useful formula for the topological dimension
of the Gromov boundary of a word hyperbolic Coxeter group,
which is due to Davis \cite{da2} and Bestvina and Mess \cite{bm}.
In Subsection \ref{subsect1_4} we recall a result of Bourdon and Kleiner \cite{bk}, which
confirms the Cannon's conjecture in the framework of word hyperbolic
Coxeter groups. In Subsection \ref{subsect1_6} we discuss another result,
which is implicit in the paper \cite{bk}
by Bourdon
and Kleiner, 
namely the fact that if the Gromov boundary of 
an indecomposable word hyperbolic Coxeter group is the Sierpi\'nski
curve then the nerve of the corresponding Coxeter system is a planar
simplicial complex. Since the 
arguments for
this fact provided in \cite{bk} 
are
extremely
sketchy, we include an extended exposition of 
its proof. 
In particular,
in this exposition we refer to some auxiliary result from combinatorial group
theory, which we state and prove in Subsection \ref{subsect1_5},
and for which we couldn't find an appropriate reference in the literature.

The proof of Theorem \ref{thm:main} provided in Section \ref{sect2} is split into separate parts
concerning the Menger curve and the Sierpi\'nski curve. It uses all the
preparatory results from Section \ref{sect1}.

\bigskip\noindent
{\bf Acknowledgements.} The first author was partially supported by (Polish) Narodowe Centrum Nauki, grant no  2020/37/N/ST1/01952. The third author was partially supported by (Polish) Narodowe Centrum Nauki, grant no UMO-2017/25/B/ST1/01335 .

%\medskip
%We thank the anonymous reviewer of the paper \cite{dd} (written by the first author
%of the present paper)
%for sketching and sharing with us 
%an outline of the reasonings recorded and filled with some details in this note.
%We also thank Damian Osajda for an encouragement to prepare this note.

\section{Preliminaries and preparations}\label{sect1}

In this section we collect various useful results from the literature
(or
some more or less direct
consequences of such results), and few other preparatory observations.
We will refer to all these results in our main arguments in Section
\ref{sect2}.

\subsection{Characterizations of the Sierpi\'nski curve and the Menger curve}\label{subsect1-1}

By a result of Whyburn \cite{wh}, the Sierpi\'nski curve is the unique metrizable topological space
which is compact, connected, locally connected, 1-dimensional, without local cut points
and planar. A somewhat similar result of Anderson \cite{an} characterizes the Menger curve as the
unique compact metrisable space which is connected, locally connected, 1-dimensional, has
no local cut points, and is nowhere planar (nowhere planarity means that no open
subset of the space is planar).

By referring to the above characterizations, 
%M.~Kapovich 
the second author
and B.~Kleiner made in their
paper \cite{kk} the following observation.

\begin{prop}[M.~Kapovich and B.~Kleiner  \cite{kk}]\label{prop:characterizations}
	Let $G$ be a word hyperbolic group, and suppose that its Gromov boundary $\partial G$
	is connected, 1-dimensional, and has no local cut points. Then $\partial G$ is homeomorphic
	either to the Sierpi\'nski curve or to the Menger curve.
\end{prop}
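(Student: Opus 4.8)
The plan is to check that $\partial G$ meets all the hypotheses shared by Whyburn's and Anderson's characterizations recalled in Subsection \ref{subsect1-1}, and then to split into two cases according to planarity. Both characterizations require the space to be compact, metrisable, connected, locally connected, $1$-dimensional, and without local cut points; they differ only in that the Sierpi\'nski curve is planar while the Menger curve is nowhere planar. Three of the common hypotheses (connected, $1$-dimensional, no local cut points) are assumed outright, and compactness and metrisability of the Gromov boundary of a word hyperbolic group are standard. So the only shared hypothesis that needs an argument is local connectedness.

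For local connectedness I would argue as follows. Since $\partial G$ is $1$-dimensional it is infinite, so $G$ is nonelementary; and since $\partial G$ is connected, $G$ is one-ended. By the combined work of Bestvina--Mess, Bowditch and Swarup (the resolution of the cut-point conjecture for hyperbolic groups), the boundary of a one-ended word hyperbolic group has no global cut point and is therefore locally connected. Hence $\partial G$ is a Peano continuum which is $1$-dimensional and has no local cut points, and the outcome is now entirely governed by planarity.

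The core of the proof is a planar/nowhere-planar dichotomy, which I would obtain from minimality of the action. Let $P\subseteq\partial G$ be the set of points admitting a planar open neighbourhood. Then $P$ is open by definition, and it is $G$-invariant because $G$ acts by homeomorphisms. The action of a nonelementary hyperbolic group on its boundary is minimal (every orbit is dense), so any nonempty open invariant set is all of $\partial G$; thus either $P=\emptyset$ or $P=\partial G$. If $P=\emptyset$, then $\partial G$ is nowhere planar and Anderson's characterization identifies it with the Menger curve. It then remains to handle the case $P=\partial G$, i.e. $\partial G$ is everywhere locally planar, where the task is to upgrade local planarity to genuine planarity so that Whyburn's characterization applies.

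This last upgrade is the main obstacle, and I expect to settle it using the convergence dynamics of the action together with Claytor's criterion (a Peano continuum embeds in $S^2$ iff it contains no homeomorphic copy of one of finitely many fixed non-planar obstruction continua). Suppose for contradiction that $\partial G$ is locally planar but not planar; then it contains a compact obstruction $Z$. Since every obstruction continuum has local cut points while $\partial G$ has none, $Z$ is a proper subset, so $\partial G\setminus Z$ is nonempty and open. Choosing a loxodromic element $g$ whose repelling fixed point lies in $\partial G\setminus Z$ (fixed-point pairs of loxodromics are dense in $\partial G\times\partial G$), the powers $g^n$ contract $Z$ into arbitrarily small neighbourhoods of the attracting fixed point $b$. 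For large $n$ the set $g^n(Z)$ then lies inside a planar neighbourhood of $b$, contradicting that $g^n(Z)\cong Z$ is non-planar. Hence $\partial G$ is planar, and Whyburn's characterization identifies it with the Sierpi\'nski curve, completing the dichotomy.
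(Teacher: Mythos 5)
Your argument is correct, but note that the paper does not prove this proposition at all: it is stated as an observation of Kapovich and Kleiner and simply cited from \cite{kk}, so there is no in-paper proof to compare against. What you have written is essentially a faithful reconstruction of the Kapovich--Kleiner argument itself: local connectedness via Bestvina--Mess together with the Bowditch--Swarup cut-point theorem, the planar/nowhere-planar dichotomy from minimality of the boundary action, and the upgrade from local planarity to planarity by combining Claytor's embedding criterion with north--south dynamics of a loxodromic element to contract a compact non-planar obstruction into a planar neighbourhood. All the steps check out (in particular, the obstruction continua are compact and have local cut points, so the complement $\partial G\setminus Z$ is indeed a nonempty open set in which to place the repelling fixed point), so this is a complete proof of the statement the paper takes as a black box.
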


\subsection{Connectedness and non-existence of local cut points in the Gromov boundary $\partial W$}\label{subsect1_2}

It is a well known fact that once a hyperbolic group is 1-ended
then its Gromov boundary is not only connected, but also locally connected
(see e.g.~Theorem 7.2 in \cite{kb}).
This allows to discuss existence of local cut points in the boundary. As far as this issue,
we have the following 
observation,
which probably belongs to folklore.

\bigskip
\begin{prop}\label{prop:unseparability}
	Let $(W,S)$ be a  Coxeter system, and let $L^\bullet$ be its labelled nerve. Suppose also that 
	the group $W$ is infinite and word hyperbolic. Then the Gromov boundary $\partial W$ is connected and
	has no local cut points iff $L^\bullet$ is unseparable and not a 3-cycle.
\end{prop}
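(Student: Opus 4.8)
The plan is to prove Proposition \ref{prop:unseparability} by splitting the biconditional into its two directions and relating topological properties of $\partial W$ to algebraic splittings of $W$, using the Mihalik--Tschantz characterization of unseparability quoted in the introduction. The key translation principle is that local cut points in the Gromov boundary of a one-ended hyperbolic group correspond, via Bowditch's JSJ theory, to nontrivial splittings over two-ended subgroups, while global cut points (equivalently, non-connectedness after removing a point, i.e. failure of one-endedness) correspond to splittings over finite subgroups. Since Mihalik--Tschantz tell us that $W$ has no nontrivial splitting over a finite or two-ended subgroup precisely when $L^\bullet$ is unseparable, the heart of the argument is to match up ``unseparable and not a 3-cycle'' with ``connected and no local cut points.''

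I would organize the proof as follows. First I would dispose of connectedness: $\partial W$ is connected if and only if $W$ is one-ended, which for an infinite hyperbolic Coxeter group means $W$ does not split nontrivially over a finite subgroup. By Mihalik--Tschantz this failure of finite splitting is part of unseparability (connectedness of $L^\bullet$ together with absence of a separating simplex rules out the relevant finite splittings), so I would check that these unseparability conditions are exactly equivalent to one-endedness. Second, assuming $W$ is one-ended so that $\partial W$ is connected and locally connected by Proposition \ref{prop:unseparability}'s cited fact, I would invoke Bowditch's theorem: for a one-ended hyperbolic group that is not virtually Fuchsian, $\partial W$ has a local cut point if and only if $W$ splits nontrivially over a two-ended subgroup. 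The absence of such a splitting is the remaining content of unseparability (no separating pair of nonadjacent vertices, no separating labelled suspension), again by Mihalik--Tschantz.

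The exceptional role of the 3-cycle is the delicate point and deserves separate treatment. A 3-cycle nerve yields a group that is virtually Fuchsian (a hyperbolic triangle reflection group), whose boundary is the circle $S^1$; every point of $S^1$ is a local cut point, yet the group has no splitting over a two-ended subgroup in the Mihalik--Tschantz sense, so a 3-cycle nerve can be unseparable while $\partial W$ does have local cut points. This is precisely why Bowditch's dichotomy must exclude the virtually Fuchsian case, and why the statement must explicitly exclude the 3-cycle. I would therefore argue that among unseparable labelled nerves the virtually Fuchsian groups are exactly those whose nerve is a 3-cycle, so that excluding the 3-cycle is equivalent to excluding the Fuchsian case; then for the remaining groups Bowditch's characterization applies cleanly.

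The main obstacle, and the step I expect to require the most care, is the precise bookkeeping between the combinatorial notion of unseparability (separating simplices, separating pairs of nonadjacent vertices, separating labelled suspensions) and the group-theoretic notion of splitting over finite or two-ended subgroups, i.e. verifying that the Mihalik--Tschantz statement quoted in the introduction genuinely covers \emph{all} the relevant splittings and no others. In particular one must confirm that the three kinds of ``separating'' configurations correspond respectively to the geometric decompositions producing finite-edge-group splittings (separating simplex) and two-ended-edge-group splittings (separating nonadjacent pair, producing an infinite dihedral or $\mathbb{Z}$-type edge group; separating labelled suspension, producing the remaining virtually-$\mathbb{Z}$ edge groups). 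Once this correspondence is pinned down, together with the identification of the Fuchsian locus with the 3-cycle, the equivalence follows by assembling Bowditch's and Mihalik--Tschantz's results.
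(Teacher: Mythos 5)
Your proposal follows essentially the same route as the paper's proof: connectedness of $\partial W$ is reduced to one-endedness of $W$ and hence to connectedness of the nerve with no separating simplex, local cut points are handled via Bowditch's theorem combined with the Mihalik--Tschantz visual splitting theorem (translating ``no splitting over a $2$-ended subgroup'' into the absence of separating nonadjacent pairs and separating labelled suspensions), and the $3$-cycle exclusion is obtained exactly as you suggest, from Davis's characterization of cocompact Fuchsian Coxeter groups. The only slight imprecision is your aside identifying failure of connectedness of $\partial W$ with ``global cut points''; what is actually needed, and what you correctly use in the body of the argument, is simply that $\partial W$ is connected iff $W$ is one-ended.
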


\begin{proof}
	{\it Step 1.} Since connectedness of the boundary $\partial W$ is equivalent to 1-endedness of $W$,
	by Theorem 8.7.2 in \cite{da} we get that $\partial W$ is connected iff the nerve $L$
	is connected and has no separating simplex.
	
	\bigskip\noindent
	{\it Step 2.}
	By Theorem 8.7.3 in \cite{da}, a Coxeter group is 2-ended iff it decomposes as the direct product
	of its infinite dihedral special subgroup and its finite (possibly trivial) special subgroup.
	Equivalently, a Coxeter group is 2-ended iff its labelled nerve is either a doubleton or
	a labelled suspension (as defined in the introduction). 
	
	As a consequence of the above, if the group $W$ is 1-ended, 
	non-existence of a separating pair of non-adjacent vertices and of a separating 
	labelled suspension (in the labelled nerve $L^\bullet$) means 
	exactly that $W$ does not visually split (in the sense of the
	paper \cite{mt} by Mihalik and Tschantz) over a 2-ended subgroup. More precisely,
	this means that $W$ cannot be expressed as an essential free product of its two special
	subgroups, amalgamated along a 2-ended special subgroup. It follows from the
	main result of the same paper \cite{mt} that non-existence of a separating
	pair of non-adjacent vertices and of a separating
	labelled suspension in $L^\bullet$ is equivalent
	to the fact that $W$ does not split along any 2-ended subgroup.
	
	\bigskip\noindent
	{\it Step 3.}
	By a result of Bowditch \cite{b}, the Gromov boundary $\partial G$ of a 1-ended hyperbolic group $G$
	has no local cut point iff $G$ has no splitting along a 2-ended subgroup and is not a cocompact
	Fuchsian group.  By a result of Davis (see Theorem B in \cite{da2} or Theorem 10.9.2 in \cite{da}),
	a Coxeter group is
	a cocompact Fuchsian group iff its nerve is either a triangulation of $S^1$ or the group splits
	as the direct product of a special subgroup with the nerve $S^1$, and another special subgroup,
	which is finite.
	It follows from these two results, and from the conclusion of Step 2, that the Gromov boundary
	$\partial W$
	of a 1-ended word hyperbolic Coxeter group $W$ has no local cut point iff its labelled nerve
	$L^\bullet$ has no separating pair of non-adjacent vertices, no separating labelled suspension,
	and is not a 3-cycle.
	
	\bigskip\noindent
	{\it Step 4.}
	Proposition \ref{prop:unseparability} follows by combining the observations of Steps 1 and 3.
\end{proof}

\subsection{Topological dimension of the Gromov boundary $\partial W$}\label{subsect1_3}

Recall that,
given a finite simplicial complex $K$ we have defined (in the introduction) 
its {\it puncture-respecting cohomological
	dimension}, denoted as $\text{\rm pcd}(K)$, by the formula
$$
\text{\rm pcd}(K):=\max\{ n:\overline H^n(K)\ne0 \text{ \rm or }
\overline H^n(K\setminus\sigma)\ne0
\text{ \rm for some $\sigma\in \mathcal{S}(K)$} \},
$$
where $\mathcal{S}(K)$ is the family of all closed simplices of $K$.
The role of this concept for our considerations in this paper comes from the following
observation. 

\bigskip

\begin{prop}\label{prop:top_dim}
	Let $(W,S)$ be a Coxeter system, and let $L$ be its nerve. Suppose also that 
	the group $W$ is word hyperbolic. Then
	$$
	\dim\partial W=\text{\rm pcd}(L).
	$$
\end{prop}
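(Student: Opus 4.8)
The plan is to connect the topological dimension of the Gromov boundary to a known algebraic invariant of the group, and then to compute that invariant combinatorially from the nerve. The key intermediate object is the virtual cohomological dimension $\operatorname{vcd}(W)$. For a word hyperbolic group $W$, the boundary $\partial W$ is finite-dimensional and there is a classical relationship $\dim\partial W = \operatorname{vcd}(W) - 1$; this follows from the fact that $\partial W$ is (a sphere-like) boundary of a contractible complex on which $W$ acts geometrically, together with Bestvina--Mess type results relating the dimension of the boundary to the cohomological dimension of the group (one dimension is lost passing from the ``filled'' object to its boundary). So the first step is to invoke this identity $\dim\partial W = \operatorname{vcd}(W) - 1$, valid because $W$ is word hyperbolic and hence $\partial W$ is a ``Z-set boundary'' of the Davis complex.

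The second step is to express $\operatorname{vcd}(W)$ in terms of the nerve via Davis's formula. As the excerpt announces, there is a formula of M.~Davis computing the virtual cohomological dimension of a Coxeter group from the reduced cohomology of the nerve and of the nerve with simplices removed. Concretely, Davis's formula (see \emph{e.g.} Corollary 8.5.3 or the surrounding material in the Davis book) computes $\operatorname{vcd}(W)$ as one more than the largest $n$ for which $\overline{H}^n(L) \ne 0$ or $\overline{H}^n(L\setminus\sigma)\ne 0$ for some simplex $\sigma$. The point is that this ``largest $n$ plus one'' is exactly $\operatorname{pcd}(L) + 1$ by the very definition of $\operatorname{pcd}$ given in the introduction. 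Thus I would state $\operatorname{vcd}(W) = \operatorname{pcd}(L) + 1$.

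Combining the two steps gives
$$
\dim\partial W = \operatorname{vcd}(W) - 1 = \bigl(\operatorname{pcd}(L)+1\bigr) - 1 = \operatorname{pcd}(L),
$$
which is the desired equality.

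The main obstacle, and the place where care is needed, is the precise form of Davis's formula and its exact hypotheses. Davis's general formula for $\operatorname{vcd}$ of an arbitrary Coxeter group is stated in terms of the reduced cohomology of the nerve with various full subcomplexes or punctures removed; one must verify that, in the word hyperbolic case, this reduces cleanly to the $\operatorname{pcd}$ quantity defined here (in particular that it suffices to delete \emph{closed simplices} $\sigma$ rather than more general subcomplexes, and that no boundedness or torsion subtleties intervene). A secondary subtlety is justifying $\dim\partial W = \operatorname{vcd}(W)-1$ rigorously: one should cite the Bestvina--Mess equality $\operatorname{cd}(\Gamma) = \dim\partial\Gamma + 1$ for a torsion-free $\Gamma$ of finite cohomological dimension acting geometrically with $\partial\Gamma$ a Z-set boundary, applied to a finite-index torsion-free subgroup of $W$ (which exists by Selberg's lemma and shares the boundary of $W$). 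I would expect these citations to close the gap, so the real work is bookkeeping rather than new ideas.
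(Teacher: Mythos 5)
Your proposal follows exactly the same route as the paper's proof: the identity $\operatorname{vcd}(W)=\dim\partial W+1$ from Bestvina--Mess combined with Davis's formula $\operatorname{vcd}(W)=\operatorname{pcd}(L)+1$ (the paper cites Corollary 8.5.5 in Davis's book). Your additional remarks on passing to a torsion-free finite-index subgroup and on matching Davis's formula to the definition of $\operatorname{pcd}$ are sensible bookkeeping, but the argument is the same.
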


\begin{proof}
	Denote by $\text{\rm vcd}(W)$ the virtual cohomological dimension of $W$. It follows from
	results of Mike Davis that
	$\text{\rm vcd}(W) = \text{\rm pcd}(L)+1$
	(see Corollary 8.5.5 in \cite{da}). On the other hand, by the result of M.~Bestvina and G.~Mess
	\cite{bm}, we have $\text{\rm vcd}(W) =\dim\partial W+1$, hence the proposition.
\end{proof}

\subsection{Cannon's conjecture for Coxeter groups}\label{subsect1_4}

The following result has been proved using quite advanced methods by M.~Bourdon
and B.~Kleiner in \cite{bk}, and its short proof as presented below (indicated by M. Davis)
has been also outlined in the same paper.
We include this short proof for completeness (since our statement, being
convenient for our applications,
is not identical to that in \cite{bk}), and for reader's convenience.

\bigskip
\begin{prop}\label{prop:cannon}
	Let $(W,S)$ be an indecomposable Coxeter system, and let $L$ be its nerve. Suppose also that 
	the group $W$ is word hyperbolic. Then the following conditions are equivalent:
	
	\begin{enumerate}
		
		\item $\partial W\cong S^2$,
		
		\item $L$ is a triangulation of $S^2$,
		
		\item $W$ acts properly discontinuously and cocompactly, by isometries, as a reflection group,
		on the hyperbolic space $\mathbb{H}^3$.
		
	\end{enumerate}
	
\end{prop}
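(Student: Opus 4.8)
The plan is to establish the cycle of implications $(1)\Rightarrow(2)\Rightarrow(3)\Rightarrow(1)$, the genuine content sitting in the first two steps. The implication $(3)\Rightarrow(1)$ is classical: a properly discontinuous, cocompact, isometric action of $W$ on $\mathbb{H}^3$ makes $W$ quasi-isometric to $\mathbb{H}^3$ by the Milnor--\v{S}varc lemma, and since the Gromov boundary is a quasi-isometry invariant we get $\partial W\cong\partial\mathbb{H}^3\cong S^2$.

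For $(1)\Rightarrow(2)$ I would combine the Bestvina--Mess theorem with a combinatorial characterization of Davis. Since $\partial W\cong S^2$ is a homology $2$-manifold with the integral homology of $S^2$, the work of Bestvina and Mess \cite{bm} gives that $W$ is a virtual Poincar\'e duality group of dimension $3$ (passing, as one may for Coxeter groups, to a torsion-free finite-index subgroup with the same boundary). For Coxeter groups there is a purely combinatorial translation of this property (see \cite{da}): $W$ is a virtual $PD^n$-group if and only if its nerve $L$ is a generalized homology $(n-1)$-sphere. Hence $L$ is a generalized homology $2$-sphere; as every $2$-dimensional homology manifold is an honest closed surface, $L$ is a closed surface with the homology of $S^2$, i.e.\ a triangulation of $S^2$. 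As a sanity check, Proposition \ref{prop:top_dim} gives independently $\mathrm{pcd}(L)=\dim\partial W=2$, consistent with $L$ being a $2$-sphere.

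The heart of the argument is $(2)\Rightarrow(3)$, which I would derive from Andreev's theorem on compact hyperbolic $3$-polytopes. The $1$-skeleton of a triangulation $L$ of $S^2$ is a maximal planar graph, hence $3$-connected, so by Steinitz's theorem the dual cell decomposition of $S^2$ is realized by a simple convex $3$-polytope $P$, whose facets correspond to the vertices $s\in S$, whose edges correspond to the edges $[s,t]$ of $L$, and whose vertices correspond to the $2$-simplices of $L$. I would prescribe the dihedral angle $\pi/m_{st}$ along the edge where the facets $s$ and $t$ meet, and then verify Andreev's realizability and compactness inequalities; the point is that they match the combinatorial data of $(W,S)$ term by term. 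The condition at each vertex of $P$ reads $1/m_{st}+1/m_{tu}+1/m_{su}>1$, which says precisely that $\{s,t,u\}$ spans a $2$-simplex of $L$, i.e.\ that $W_{\{s,t,u\}}$ is finite, hence it holds automatically. The prismatic $3$-circuit condition concerns empty triangles of $L$ and demands $1/m_{st}+1/m_{tu}+1/m_{su}<1$; for such a triple $W_{\{s,t,u\}}$ is infinite, hence non-spherical, and Moussong's exclusion of affine special subgroups of rank $3$ upgrades the resulting inequality $\le 1$ to a strict one. The prismatic $4$-circuit condition corresponds exactly to configurations producing a $D_\infty\times D_\infty$ special subgroup, ruled out by Moussong's no-direct-product condition. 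With all of Andreev's conditions in force, his theorem produces a compact polytope in $\mathbb{H}^3$ realizing these dihedral angles, and the reflection group in its facets is precisely $W$, giving $(3)$.

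I expect the main obstacle to be exactly this last step: the careful dictionary between Moussong's hyperbolicity criterion and Andreev's compactness inequalities, together with the exceptional low-complexity cases. The most conspicuous of these is $L=\partial\Delta^3$ (four vertices), where Andreev's theorem is usually stated with the tetrahedron excluded, so existence of the corresponding cocompact hyperbolic reflection group must be argued separately through the classical direct analysis of hyperbolic tetrahedra.
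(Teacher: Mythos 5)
Your proof follows exactly the same cycle $(1)\Rightarrow(2)\Rightarrow(3)\Rightarrow(1)$ as the paper, with the same ingredients: Bestvina--Mess plus Davis's Theorem 10.9.2 for $(1)\Rightarrow(2)$, Andreev's theorem applied to the dual polytope for $(2)\Rightarrow(3)$, and $\partial\mathbb{H}^3\cong S^2$ for $(3)\Rightarrow(1)$. The only difference is that you spell out the verification of Andreev's vertex and prismatic-circuit conditions via Moussong's criterion (and flag the tetrahedral case), which the paper leaves implicit in its one-line citation; your details are correct.
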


\begin{proof}
	We justify the implications $1.\Rightarrow 2.\Rightarrow 3.\Rightarrow 1.$
	
	\medskip\noindent
	{\it Proof of \enskip$1.\Rightarrow 2.$} By result of M.~Bestvina and G.~Mess (Corollary 1.3(c)
	in \cite{bm}), if $\partial W\cong S^2$ then $W$ is a virtual Poincar\'e duality group
	of dimension 3. By result of M.~Davis (Theorem 10.9.2 in \cite{da}), the nerve $L$ 
	is then a triangulation of $S^2$ (here we use the assumption of indecomposability).
	
	\medskip\noindent
	{\it Proof of \enskip$2.\Rightarrow 3.$} This implication follows by applying Andreev's theorem
	(see \cite{a}, or Theorem 6.10.2 in \cite{da}) to the dual polyhedron of the triangulation. 
	
	\medskip\noindent
	{\it Proof of \enskip$3.\Rightarrow 1.$} By the assumptions on $W$ in condition 3, we obviously
	have $\partial W=\partial\mathbb{H}^3$, and the implication follows
	from the fact that
	$\partial\mathbb{H}^3\cong S^2$.
\end{proof}

\bigskip
For the later arguments of this paper we only need the implication $1.\Rightarrow 2$.

\subsection{An observation from combinatorial group theory}\label{subsect1_5}

Let $\Gamma$ be an arbitrary group and $H_i$ for $1\leq i\leq n$ be a collection of its (not necessarily pairwise distinct) subgroups. In this subsection we describe two group operations associated to this data, and discuss
the relationship between the groups obtained by these operations. 
This
observation (Lemma \ref{lem:doubles} below) will be useful in the argument in Subsection \ref{subsect1_6}.

In the next definition we describe the first  of the two operations, 
which 
%Kapovich 
the second author
and B.~Kleiner call 
the {\it double} of $\Gamma$ with respect to the 
%family $\{ H_i \}$
tuple $(H_i)$
(see \cite{kk}).

\begin{defn}\label{defn:Defn1.5.1}
	Given a group $\Gamma$ and a finite 
	%family 
	tuple
	of its subgroups 
	%$\{H_i\}$, 
	$(H_i)$,
	the {\it double} $\Gamma\double\Gamma$ is  the fundamental group
	$\pi_1{\mathcal G}$ of the graph of groups $\mathcal{G}$ described as follows. The underlying graph 
	of $\mathcal G$ consists of two vertices
	$v$ and $v'$ and $n$ edges $e_1,\dots,e_n$ each of which has both $v$ and $v'$ as its
	endpoints. The vertex groups at $v$ and $v'$ are both identified with $\Gamma$ while the edge
	group at any edge $e_i$ is identified with $H_i$. The structure homomorphisms are all
	taken to be the inclusions.
\end{defn}

Let $\Gamma=\langle S|R\rangle$, and let $\Gamma'=\langle S'|R'\rangle$ be a second copy of $\Gamma$ (given by the same presentation).
Denote by $\mathcal{W}_{H_i}$ the set of words over $S\cup S^{-1}$ that represent elements of the subgroup $H_i$ and for a word $w$ over $S\cup S^{-1}$ let $w'$ be the word over $S'\cup S'^{-1}$ obtained from $w$ by substituting each letter with its counterpart from $S'\cup S'^{-1}$.
Note that  (e.g.~by Definition 7.3 in \cite{dicdun}), the double $\Gamma\double\Gamma$ can be
also described as follows. 
Consider an auxilliary group 
$P=P(\Gamma,(H_i))$
given by the presentation
$$\langle S\sqcup S'\sqcup\{u_i:1\leq i\leq n\}|R\cup R'\cup\{h_i u_i=u_ih_i':1\leq i\leq n,h_i\in \mathcal{W}_{H_i}\} \rangle.$$
Then $\Gamma\double\Gamma$ is a subgroup of $P$
consisting of all elements $p$ such that there exists an expression $p=w_0u_{i_1}w_1u_{i_2}^{-1}w_2\ldots w_{2m-1}u_{i_{2m}}^{-1}w_{2m}$ for some $m\ge0, 1\leq i_k\leq n$ and words $w_k$ over $S\cup S^{-1}$ and $S'\cup S'^{-1}$ for even and odd $k$ respectively.

\medskip
The second of the group operations is given in the following.

\begin{defn}\label{defn:Defn1.5.2}
	Given a group $\Gamma=\langle S|R \rangle$ and a finite 
	%family 
	tuple
	of its subgroups 
	%$\{H_i\}$, 
	$(H_i)$,
	the {\it mirror double} 
	$\widetilde{\Gamma}$ of the group $\Gamma$ with respect to the 
	%family $\{H_i\}$, 
	tuple $(H_i)$,
	is the group given by the presentation 
	
	\begin{align*}
		\widetilde{\Gamma}:=&\langle S\sqcup\{s_i:1\leq i\leq n\}|   \nonumber  \\
		&R\cup \{s_i^2=1:1\leq i\leq n\} \cup\{h_is_i=s_ih_i:1\leq i\leq n,h_i\in \mathcal{W}_{H_i}\} \rangle.
		\nonumber  
	\end{align*}

\end{defn} 

Observe that the mirror double is (up to isomorphism) independent of the presentation of $\Gamma$ used in the definition above.

\begin{lem}\label{lem:doubles}
	For each group $\Gamma$ and any finite 
	%family 
	tuple
	of its subgroups 
	%$\{H_i\}$ 
	$(H_i)$
	the double $\Gamma\double\Gamma$ is isomorphic to an index 2 subgroup of the mirror double $\widetilde{\Gamma}$.
\end{lem}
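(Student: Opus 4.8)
The plan is to identify $\Gamma\star\Gamma$ with the kernel of a natural ``sign'' homomorphism $\epsilon\colon\widetilde{\Gamma}\to\mathbb Z/2$. Define $\epsilon$ on generators by $\epsilon(s)=0$ for $s\in S$ and $\epsilon(s_i)=1$ for $1\le i\le n$. First I would check that $\epsilon$ respects the three families of defining relations of $\widetilde{\Gamma}$ from Definition \ref{defn:Defn1.5.2}: the relators in $R$ are words in $S$ and map to $0$; each relator $s_i^2$ maps to $1+1=0$; and each relator $h_is_ih_i^{-1}s_i^{-1}$ maps to $0+1+0+1=0$, since $h_i$ is a word in $S$. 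Hence $\epsilon$ is a well-defined epimorphism (note $n\ge1$, as otherwise the underlying graph of $\mathcal G$ would be disconnected), and $K:=\ker\epsilon$ is a subgroup of index $2$. This $K$ is the candidate for $\Gamma\star\Gamma$.

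Next I would realize $\widetilde{\Gamma}$ as the fundamental group of a graph of groups $\mathcal H$ and read off $K$ via Bass--Serre theory. Take $\mathcal H$ to be the star with central vertex $v$ carrying the group $\Gamma$, with leaf vertices $v_1,\dots,v_n$ carrying the groups $H_i\times\langle s_i\rangle\cong H_i\times\mathbb Z/2$, and with one edge from $v$ to $v_i$ carrying the group $H_i$, included into $\Gamma$ by the given inclusion and into $H_i\times\langle s_i\rangle$ as the first factor. Comparing presentations shows $\pi_1\mathcal H\cong\widetilde{\Gamma}$: amalgamating $\Gamma$ with $H_i\times\langle s_i\rangle$ over $H_i$ adjoins exactly one new generator $s_i$ subject to $s_i^2=1$ and $h_is_i=s_ih_i$ for $h_i\in\mathcal W_{H_i}$, which are precisely the relations of $\widetilde{\Gamma}$.

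Finally I would let $K$ act on the Bass--Serre tree $T$ of $\mathcal H$ and compute the quotient graph of groups $\mathcal H':=K\backslash T$, whose fundamental group is $K$. The main computations are the orbit counts and stabilizers. Since $\Gamma\le K$ while each $H_i\times\langle s_i\rangle\not\le K$ (because $\epsilon(s_i)=1$), a short double-coset argument using the $K$-invariant map $g\mapsto\epsilon(g)$ shows: the type-$v$ vertices fall into exactly two $K$-orbits, distinguished by the value of $\epsilon$, each with stabilizer a conjugate of $\Gamma$; each family of leaf vertices forms a single $K$-orbit with stabilizer $H_i$; and the edges of each colour $i$ fall into two $K$-orbits, each with stabilizer $H_i$, joining the two $\Gamma$-vertices to the single $i$-th leaf. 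Thus $\mathcal H'$ has two vertices $\bar v,\bar v'$ of group $\Gamma$, leaves $\bar v_i$ of group $H_i$, and two edges of group $H_i$ running from $\bar v_i$ to $\bar v$ and to $\bar v'$. Since the edge from $\bar v_i$ to $\bar v$ has edge group equal to the full leaf group $H_i$, it may be collapsed; performing these $n$ collapses absorbs every leaf and leaves precisely two vertices of group $\Gamma$ joined by $n$ edges of groups $H_1,\dots,H_n$ with the standard inclusions, i.e.~the graph of groups $\mathcal G$ of Definition \ref{defn:Defn1.5.1}. Therefore $K\cong\pi_1\mathcal G=\Gamma\star\Gamma$, as required.

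The main obstacle is this last step: correctly computing $K\backslash T$ --- in particular verifying that each colour contributes two edge orbits meeting a single leaf orbit --- and checking that collapsing the valence-two leaves reproduces $\mathcal G$ with the correct structure maps, rather than extra edges or wrong edge groups. If one prefers an explicit map as a cross-check, the assignment $s\mapsto s$, $s'\mapsto s$, $u_i\mapsto s_i$ defines an epimorphism $P\to\widetilde{\Gamma}$ carrying the double onto $K$; Bass--Serre normal forms then supply the injectivity that the quotient computation above already yields for free.
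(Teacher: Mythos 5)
Your proof is correct, but it takes a genuinely different route from the paper's. Both arguments begin identically, with the sign homomorphism onto $\mathbb{Z}_2$ (your $\epsilon$ is the paper's $\sigma$) and the identification of the candidate index-2 subgroup as its kernel. From there the paper works entirely with presentations: it defines $\rho:P\to\widetilde{\Gamma}$ by $s,s'\mapsto s$, $u_i\mapsto s_i$, observes that $\rho$ maps the double onto $\ker\sigma$, and proves injectivity by hand, constructing a set-theoretic lift $\ell:\ker\sigma\to\Gamma\star\Gamma$ on words and checking it is unchanged under each elementary operation (insertion or deletion of $a^{-1}a$ and of relators of $\widetilde{\Gamma}$), so that $\ell\circ\rho|_{\Gamma\star\Gamma}=\mathrm{id}$. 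You instead realize $\widetilde{\Gamma}$ as the fundamental group of a star of groups with central vertex $\Gamma$ and leaves $H_i\times\mathbb{Z}/2$ amalgamated over $H_i$ (the presentation check here is routine, since commuting $s_i$ with generators of $H_i$ already forces commutation with all of $\mathcal{W}_{H_i}$), let $\ker\epsilon$ act on the Bass--Serre tree, and read off the quotient graph of groups; your orbit and stabilizer counts (two $\Gamma$-orbits of central vertices distinguished by $\epsilon$, one leaf orbit per colour with stabilizer $H_i$, two edge orbits per colour) are correct, and collapsing the $n$ edges whose edge group equals the leaf group recovers $\mathcal{G}$. The one point you flag as delicate --- that the induced edge monomorphisms after collapsing are the standard inclusions --- is harmless, since changing edge maps by conjugation in the vertex groups does not affect the isomorphism type of the fundamental group of a graph of groups. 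The trade-off is the usual one: your argument is more conceptual and gets injectivity for free from Bass--Serre theory, at the cost of invoking that machinery, whereas the paper's argument is elementary and self-contained but requires the somewhat fiddly verification that the lift $\ell$ is well defined. Your closing remark that the explicit map $s,s'\mapsto s$, $u_i\mapsto s_i$ can serve as a cross-check is precisely the map the paper takes as primary.
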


\begin{unrem}
	The concepts of a double $\Gamma\double\Gamma$ and a mirror double 
	$\widetilde{\Gamma}$ are well known e.g.~in the context of compact
	hyperbolic manifolds, $M$, with nonempty totally geodesic boundary
	$\partial M$. If we take $\Gamma=\pi_1M$, and if subgroups
	$H_i<\Gamma$ correspond to the fundamental groups of the boundary
	components, the double $\Gamma\double\Gamma$ is the fundamental group
	of the double $DM$ of the manifold $M$ along $\partial M$.
	In the same situation, the mirror double $\widetilde{\Gamma}$
	corresponds to the fundamental group of the orbifold $\mathcal{O}_M$
	with the underlying space $M$, in which the local groups at the boundary
	are the groups of order 2 representing geometrically local reflections. 
	Since the double $DM$ is obviously a degree 2 covering of the orbifold
	$\mathcal{O}_M$ (in the orbifold sense), the assertion of Lemma
	\ref{lem:doubles} is obvious in this situation. The full statement
	of Lemma \ref{lem:doubles} is just a group theoretic extension
	of that observation (which could be also given a geometrical sense).
	
\end{unrem}

\begin{proof}
	Consider the homomorphism $\rho\colon P\to\widetilde{\Gamma}$ given by 
	$\rho(s)=\rho(s')=s$ for each $s\in S$, and $ \rho(u_i)=s_i$ for each $1\leq i\leq n$. 
	Consider also the homomorphism 
	$\sigma\colon\widetilde{\Gamma}\to\mathbb{Z}_2$ given by  $\sigma(s)=0$ for 
	$s\in S$, and $\sigma(s_i)=1$ for $1\leq i\leq n$. It suffices to show that $\rho$ restricts to an isomorphism between $\Gamma\double\Gamma$ and $\ker\sigma$, which is an index 2 subgroup of $\widetilde{\Gamma}$. It is easy to check that $\rho(\Gamma\double\Gamma)=\ker\sigma$, so it remains to show that $\rho|_{\Gamma\double\Gamma}$ is injective. To this end we introduce the following lift function $\ell\colon \ker\sigma\to\Gamma\double\Gamma$. 
	For an element $\xi\in\ker\sigma$, and for its any expression by a word of the form $w_0s_{i_1}^{\epsilon_1}w_1s_{i_2}^{\epsilon_2}\cdot\ldots\cdot w_{2m-1}s_{i_{2m}}^{\epsilon_{2m}}w_{2m}$ for some (possibly empty) words $w_i$ over the alphabet $S\cup S^{-1}$, $\epsilon_j\in\{-1,1\}$ and for $1\leq i_j\leq n$, put $\ell(\xi):=w_0u_{i_1}w_1'u_{i_2}^{-1}\cdot\ldots\cdot w_{2m-1}'u_{i_{2m}}^{-1}w_{2m}$.
	The map $\ell$ is well defined, since it is easy to check that for each word 
	$$
	U=w_0s_{i_1}^{\epsilon_1}w_1s_{i_2}^{\epsilon_2}\cdot\ldots\cdot w_{2m-1}s_{i_{2m}}^{\epsilon_{2m}}w_{2m},
	$$ 
	and for each elementary operation consisting of inserting at an arbitrary place in $U$ (or deleting) a subword of the form $a^{-1}a$ for some letter $a$, or a relator (in $\widetilde{\Gamma}$) or inverse of such, resulting in the word $\hat{U}$, the words representing $\ell(U)$ and $\ell(\hat{U})$ in the definition of $\ell$ differ by an analogous elementary operation (in $P$). Moreover,
	since we then clearly have that $\ell\circ \rho|_{\Gamma\double\Gamma}=\mathrm{id}_{\Gamma\double\Gamma}$, we conclude that $\rho|_{\Gamma\double\Gamma}$ is injective,
	hence the lemma.	
\end{proof}

\subsection{Planarity of nerves}\label{subsect1_6}

We 
recall the following rather easy observation from the paper \cite{js} written by the
%second 
third
author of the present paper. 

\begin{lem}[J.~\'Swiątkowski, Lemma 1.3 in \cite{js}]\label{lem:planar}
	If the nerve $L$ of a word hyperbolic Coxeter group $W$ is a planar complex then
	the Gromov boundary $\partial W$ is a planar topological space.
\end{lem}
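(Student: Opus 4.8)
The plan is to work with the Davis complex $\Sigma=\Sigma(W,S)$, the proper CAT(0) space on which $W$ acts geometrically and whose Gromov boundary is $\partial W$; since $W$ is word hyperbolic, $\Sigma$ is $\delta$-hyperbolic and $\partial\Sigma\cong\partial W$. The structural facts I would exploit are that $\Sigma$ is tiled by copies $\{wK:w\in W\}$ of the Davis chamber $K$, that $K$ is homeomorphic to the cone $\mathrm{Cone}(|L|)$ over the geometric realization of the nerve (the cone point being the empty spherical subset), and that the mirrors $K_s$ ($s\in S$), along which neighbouring chambers $wK$ and $wsK$ are glued, are precisely the subcomplexes of $|L|\subset\partial K$ indexed by the vertices $s$. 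The idea is thus to convert a planar embedding of $L$ into a planar model of the chamber and then propagate it across the reflection structure to all of $\partial\Sigma$.

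First I would fix an embedding $|L|\hookrightarrow S^2$ and cone it to an embedding $K=\mathrm{Cone}(|L|)\hookrightarrow\mathrm{Cone}(S^2)=D^3$, so that every mirror $K_s$ comes to lie on the boundary sphere $S^2=\partial D^3$. I would then assemble the chamber balls into a thickening $\widehat\Sigma:=(W\times D^3)/\!\!\sim$, gluing $w\times D^3$ to $ws\times D^3$ along $K_s$ in exact imitation of the gluing that defines $\Sigma$. By construction $\Sigma\hookrightarrow\widehat\Sigma$ is a $W$-equivariant deformation retract, both spaces are contractible, $W$ still acts geometrically on $\widehat\Sigma$, and hence $\widehat\Sigma$ is itself $\delta$-hyperbolic with $\partial\widehat\Sigma\cong\partial\Sigma\cong\partial W$. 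At this point the problem is reduced from the (possibly high-dimensional) complex $\Sigma$ to the more manifold-like object $\widehat\Sigma$, where planarity can be used locally.

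The decisive step, and the one I expect to be the main obstacle, is to promote these local planar models into a single proper embedding of $\widehat\Sigma$ into $\mathbb{R}^3$ (or $S^3$) whose induced map on Gromov boundaries carries $\partial\widehat\Sigma$ into a round $2$-sphere. Here the relevant consequence of planarity is that the complementary regions of the mirrors in $S^2$ are discs, so that each chamber reflection is modelled on a genuine reflection of Euclidean $3$-space across a plane and the balls can be glued ``outward'' consistently and without overlap. The technical heart is precisely the global bookkeeping: one must guarantee that these reflections never force two chamber balls to collide and that the accumulation set of the infinitely many chambers is trapped inside a $2$-sphere; note that $\widehat\Sigma$ is \emph{not} in general a $3$-manifold (its singularities occurring at the cone points), which is exactly why the resulting boundary can be a proper, non-spherical, planar subset. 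Granting the embedding, $\partial W\cong\partial\widehat\Sigma$ is identified with a subset of $S^2$, i.e.\ $\partial W$ is planar. As a possible alternative route, one could instead present $\partial W$ as the inverse limit of the combinatorial spheres in $\Sigma$ and realize each of them in a fixed $S^2$ compatibly with the bonding maps; but controlling planarity through the inverse limit appears no easier than the direct embedding, so I would treat the reflection-thickening argument as primary.
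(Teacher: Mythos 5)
The paper offers no proof of this lemma at all: it is quoted as ``a rather easy observation'' from \cite{js} (Lemma 1.3 there), so your attempt can only be judged on its own terms, and as it stands it is not a proof. The step that you yourself label ``the decisive step'' and ``the technical heart'' --- a proper embedding of $\widehat\Sigma$ into $S^3$ whose limit set is trapped in a round $2$-sphere --- is where the entire content of the lemma lives, and you do not carry it out: ``granting the embedding'' is granting the lemma. The preparatory reductions are essentially sound (the mirror structure on $D^3$, the equivariant strong deformation retraction of $\widehat\Sigma=\mathcal U(W,D^3)$ onto $\Sigma$, hence $\partial\widehat\Sigma\cong\partial W$), but they only repackage the problem; incidentally, the singular points of $\widehat\Sigma$ are not the cone points (each cone point has a neighbourhood inside a single open $3$-ball) but the points of the frontiers of the mirrors in $S^2$.

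More importantly, the two ingredients you invoke for the embedding do not follow from planarity of $L$ alone. To model each gluing $wD^3\cup_{K_s}wsD^3$ on a genuine Euclidean reflection across a plane and have the development close up without overlap, exactly $2m_{st}$ chambers must fit around each stratum $K_s\cap K_t$, which forces the two mirror planes to meet at dihedral angle $\pi/m_{st}$; realizing all mirrors simultaneously as flat (or totally geodesic) discs with these prescribed angles is an Andreev-type statement and requires $L$ to sit inside a triangulation of $S^2$ with suitable combinatorics, not merely to be planar. And even granting local consistency, injectivity of the developing map (your ``chamber balls never collide'') is a global assertion that never follows from local non-overlap --- this is the classical difficulty in every Poincar\'e-polyhedron argument. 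The way this is made rigorous is to import the global consistency from an ambient reflection group rather than to develop by hand: complete the labelled complex $L^\bullet$ to a labelled triangulation $T$ of $S^2$ containing $L$ as a full subcomplex (new edges labelled $2$, the complementary discs triangulated so as to keep $W_T$ word hyperbolic, i.e.\ to create no affine and no reducible infinite special subgroups), so that $W$ becomes a special, hence convex, hence quasiconvex subgroup of $W_T$, whence $\partial W$ embeds in $\partial W_T\cong S^2$ by Proposition \ref{prop:cannon} (via Andreev's theorem). That combinatorial completion is the real work hiding behind the lemma, and it is precisely what your sketch leaves untouched.
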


The converse implication is not true in general \cite{dhw},
but it does hold in an important special case. This is the contents of
the next result which appears implicitly
as Corollary 7.5 in \cite{bk}. The  proof given below is an expansion
of a rather sketchy argument provided in \cite{bk}.

\begin{prop}\label{prop:planar}
	Let $(W,S)$ be an indecomposable Coxeter system such that the group $W$ is word hyperbolic.
	If the Gromov boundary $\partial W$ is homeomorphic
	to the Sierpi\'nski curve then the nerve $L$ of the system $(W,S)$ is a planar
	simplicial complex.
\end{prop}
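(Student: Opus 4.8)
\emph{Plan.} The idea is to reduce the statement to the sphere case, which is already settled by Proposition~\ref{prop:cannon} (Cannon's conjecture for Coxeter groups), by passing from $W$ to an auxiliary Coxeter group whose boundary is $S^2$ and whose nerve contains $L$ as a subcomplex. The bridge between $W$ and this auxiliary group is provided by the doubling constructions of Subsection~\ref{subsect1_5}. Concretely, I would first fix a topological embedding of the Sierpi\'nski curve $\partial W$ into $S^2$. By Whyburn's description the complement of $\partial W$ in $S^2$ is a dense family of disjoint open disks $D_j$ whose boundary circles $C_j=\partial D_j$ (the \emph{peripheral circles}) have diameters tending to $0$. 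The group $W$ permutes these peripheral circles with finitely many orbits, and the stabilizer $H_i$ of a representative circle is a quasiconvex subgroup with $\partial H_i\cong S^1$.

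The first substantial step is to arrange the peripheral subgroups to be \emph{special}. Since each $H_i$ has circle boundary, it is virtually Fuchsian; in the Coxeter setting one can take $H_i$ to be a special subgroup $W_{T_i}$ whose nerve $L_{T_i}$ is a triangulation of $S^1$ (a Fuchsian Coxeter subgroup in the sense of Davis, cf.~the discussion in Step~3 of the proof of Proposition~\ref{prop:unseparability}). With this identification in hand I would form the mirror double $\widetilde{W}$ of $W$ with respect to $\{H_i\}$ (Definition~\ref{defn:Defn1.5.2}). Because the $H_i$ are special subgroups, $\widetilde{W}$ is again a Coxeter group: its labelled nerve is $\widetilde L=L\cup\bigcup_i(s_i\ast L_{T_i})$, obtained from $L$ by coning each circle $L_{T_i}$ off with a new vertex $s_i$ joined by label-$2$ edges, so that $L$ sits in $\widetilde L$ as a full subcomplex. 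By Lemma~\ref{lem:doubles}, the double $W\star W$ is an index-$2$ subgroup of $\widetilde W$.

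The topological heart of the argument is to show that $\partial(W\star W)\cong S^2$: doubling $W$ along the stabilizers of the peripheral circles amounts, on the level of boundaries, to reflecting $\partial W$ across each peripheral circle, and the boundary of the double thereby fills in all the complementary disks to recover the whole sphere. This is exactly the mechanism isolated by Kapovich and Kleiner, and it is the step where their (and Bourdon--Kleiner's) analysis of carpet boundaries does the real work; along the way one must also check, via a combination theorem, that $W\star W$ (and hence $\widetilde W$) is word hyperbolic, and that $\widetilde W$ is indecomposable. Granting this, $\widetilde W$ and its index-$2$ subgroup $W\star W$ are commensurable, so $\partial\widetilde W\cong\partial(W\star W)\cong S^2$.

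Finally I would feed this into Proposition~\ref{prop:cannon}: the indecomposable word hyperbolic Coxeter group $\widetilde W$ with $\partial\widetilde W\cong S^2$ has its nerve $\widetilde L$ a triangulation of $S^2$ (implication $1.\Rightarrow 2.$). Since a triangulation of $S^2$ is a planar complex and $L$ is a full subcomplex of $\widetilde L$, the nerve $L$ is planar, as asserted. The main obstacle is the combination of the two preparatory steps above --- pinning down the peripheral subgroups as Fuchsian special subgroups and proving that the double has spherical boundary --- since these encode the geometric content of Kapovich--Kleiner's theory and are what make the clean reduction to Cannon's conjecture possible; by comparison, the verifications of hyperbolicity and indecomposability of $\widetilde W$, and the passage from $\widetilde L\cong S^2$ to planarity of $L$, are routine.
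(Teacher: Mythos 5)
Your proposal follows essentially the same route as the paper: identify the peripheral circles and their stabilizers, arrange the peripheral subgroups to be special, form the mirror double $\widetilde W$ (a Coxeter group whose nerve cones off the $L_i$), invoke Kapovich--Kleiner for $\partial(W\star W)\cong S^2$ together with Lemma~\ref{lem:doubles}, and conclude via the implication $1.\Rightarrow 2.$ of Proposition~\ref{prop:cannon}. The one step you flag but do not carry out --- that peripheral subgroups are conjugates of special subgroups --- is exactly where the paper invests its effort, via a wall/reflection argument (each reflection whose wall meets a peripheral circle on both sides must stabilize it, whence Bourdon--Kleiner's Theorem 5.5 applies); note also that the paper needs only that the $H_i$ are special, not that their nerves are triangulations of $S^1$.
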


\begin{proof}
	We will embed the group $W$, as a special subgroup, in some larger indecomposable
	and word hyperbolic Coxeter group $\widetilde W$ such that $\partial \widetilde W\cong S^2$.
	The assertion will follow then from the implication $1.\Rightarrow 2.$ in Proposition 
	\ref{prop:cannon}.
	
	We start by recalling some facts established in the paper \cite{kk} by 
	%Kapovich 
	the second author 
	and B.~Kleiner. 
	First, the Sierpi\'nski curve contains the family of topologically 
	well distinguished pairwise disjoint subsets
	homeomorphic to $S^1$, called {\it peripheral circles}. 
	Moreover, in its action on $\partial W$ the group $W$ 
	maps peripheral circles to peripheral circles. 
	A setwise stabilizer of each peripheral circle in $\partial W$, called a {\it peripheral subgroup} of $W$,
	is a quasi-convex subgroup of $W$ for which
	the circle is its limit set, and consequently each such stabilizer is a cocompact Fuchsian group.
	The action of $W$ on the family of peripheral circles in $\partial W$ 
	has finitely many orbits, and thus we have
	finitely many conjugacy classes of peripheral subgroups in $W$.
	
	\bigskip\noindent
	{\bf Claim.} 
	{\it Each peripheral subgroup of $W$ is a conjugate of some special subgroup of $W$.}
	
	\medskip
	To prove this claim we need some terminology and notation as in
	Section 5.1 in \cite{bk}. For a generator $s\in S$,
	the {\it wall} $M_s$ is the set of setwise $s$-stabilized open edges of $\mathrm{Cay}(W,S)$ (the Cayley graph of $W$ with respect to the set of generators $S$). Then $\mathrm{Cay}(W,S)\setminus M_s$ consists of two connected components $H_-(M_s)$ and $H_+(M_s)$.
	For a generator $s\in S$ and for an arbitrary element $g\in W$ we consider the reflection $r:=gsg^{-1}$, its wall $M_r:=gM_s$ and components $H_-(M_r)$ and $H_+(M_r)$ of $\mathrm{Cay}(W,S)\setminus M_r$.
	The components are closed and convex subsets of $\mathrm{Cay}(W,S)$ and $\partial H_-(M_r)\cup\partial H_+(M_r)=\partial W$, $\partial H_-(M_r)\cap\partial H_+(M_r)=\partial M_r$ and $r$ pointwise stabilizes $\partial M_r$.

	\bigskip\noindent
	{\bf Proof of Claim:} In view of Definition 5.4 and Theorem 5.5 in \cite{bk} it suffices to show that for each peripheral circle $F$ and each reflection $r$ such that $\partial H_-(M_r)\cap F$ and $\partial H_+(M_r)\cap F$ are non-empty, it holds that $F$ is setwise stabilized by $r$. Since $(\partial H_-(M_r)\cap F)\cup(\partial H_+(M_r)\cap F)=\partial W\cap F=F$, by connectedness of $F\cong S^1$, we have that $\emptyset\neq (H_-(\partial M_r)\cap F)\cap(H_+(\partial M_r)\cap F)=\partial M_r\cap F$. Since $\partial M_r$ is pointwise stabilized by $r$, $rF\cap F\neq\emptyset$, and, finally, $rF=F$
	by the fact that each element of $W$ maps peripheral circles to peripheral circles.
	
	\bigskip
	Coming back to the proof of Proposition \ref{prop:planar},
	denote by $H_i:1\le i\le n$ a set of representatives of the conjugacy classes of peripheral
	subgroups of $W$ consisting of special subgroups of $W$. 
	For each $1\le i\le n$, denote by $L_i$ the nerve of $H_i$, and view it as a subcomplex of
	the nerve $L$ of $W$.
	We will discuss below the double $W\double W$ and the mirror double $\widetilde{W}$ of $W$ with respect to the 
	%family $\{H_i\}$ 
	tuple $(H_i)$
	(see Subsection \ref{subsect1_5}).
	As it is shown in \cite{kk}, the double $W\double W$ is a hyperbolic group and its Gromov boundary is homeomorphic to $S^2$.
	Observe also that the mirror double $\widetilde W$ is (isomorphic to) a Coxeter group with  nerve
	$\widetilde L$ obtained from the nerve $L$ of $W$ by adding a simplicial cone over each of the 
	subcomplexes $L_i$.
	Moreover, since each $H_i$ is a proper special subgroup of $W$, indecomposability of $W$
	implies indecomposability of $\widetilde W$.  By Lemma \ref{lem:doubles}, the group
	$\widetilde W$ contains $W\double W$
	as a subgroup of index 2, and hence it is also word hyperbolic and its Gromov boundary is homeomorphic to $S^2$.
	By Proposition \ref{prop:cannon},  $\widetilde L$ is then a triangulation
	of $S^2$. Since $L$ is clearly a proper subcomplex of $\widetilde L$, it is necessarily planar,
	which completes the proof of Proposition \ref{prop:planar}.
\end{proof}

\section{Proof of the main theorem}\label{sect2}

\subsection{Sierpi\'nski curve boundary}

In this rather short subsection we prove part 1 of Theorem \ref{thm:main}.

\medskip\noindent
{\bf Proof of the implication $\Rightarrow$.}
Suppose that $\partial W$ is homeomorphic to the Sierpi\'nski curve.
Then, in view of the fact that the Sierpi\'nski curve is connected and has no local cut points, 
it follows from Proposition \ref{prop:unseparability} that
$L^\bullet$ is unseparable and not a 3-cycle.
Moreover, by Proposition \ref{prop:planar}, $L$ is then a planar simplicial complex,
which completes the proof.

\medskip\noindent
{\bf Proof of the implication $\Leftarrow$.}
As any Gromov boundary of a hyperbolic group, $\partial W$ is a compact metrisable space.
Since $L$ is planar, it follows from Lemma \ref{lem:planar} that $\partial W$ is a planar space.
Since $L^\bullet$ is unseparable and not a 3-cycle, it follows from Proposition
\ref{prop:unseparability} that $\partial W$ is connected, locally connected, and has no local cut point.
Finally, it is not hard to see that since $L$ is planar, connected, has no separating simplex,
and does not coincide with a single simplex,
its puncture-respecting cohomological dimension $\text{\rm pcd}(L)$ is equal to 1.
Consequently, due to Proposition \ref{prop:top_dim}, $\partial W$ has topological dimension 1. 
Thus, by Whyburn's characterization recalled in Subsection \ref{subsect1-1},
$\partial W$ is homeomorphic to the Sierpi\'nski curve, as required.

\subsection{Menger curve boundary}

We now pass to the proof of part 2 of Theorem \ref{thm:main}.

\medskip\noindent
{\bf Proof of the implication $\Rightarrow$.}
Suppose that $\partial W$ is homeomorphic to the Menger curve.
Then, in view of the fact that the Menger curve is connected and has no local cut points, 
it follows from Proposition \ref{prop:unseparability} that
$L^\bullet$ is unseparable. Since the Menger curve has topological dimension 1,
it follows from Proposition \ref{prop:top_dim} that $\text{\rm pcd}(L)=1$.
Since the Menger curve is not planar, it follows from Lemma \ref{lem:planar}
that $L$ is also not planar, and this completes the proof of the first implication.

\medskip\noindent
{\bf Proof of the implication $\Leftarrow$.}
The boundary $\partial W$ is obviously a compact metrisable space.
Since $L^\bullet$ is not planar, not a 3-cycle, 
and since $L^\bullet$ is unseparable, it follows from Proposition
\ref{prop:unseparability} that $\partial W$ is connected, locally connected, and has no local cut point.
Since $\text{\rm pcd}(L)=1$, it follows that $\partial W$ has topological dimension 1.
In view of the above properties, it follows from Proposition \ref{prop:characterizations}
that $\partial W$ is homeomorphic either to the Sierpi\'nski curve or to the Menger curve.
However, since $L$ is not planar, it follows from Proposition \ref{prop:planar} that
$\partial W$ cannot be homeomorphic to the Sierpi\'nski curve. Consequently,
it must be homeomorphic to the Menger curve, as required.

\Addresses

\end{document}